\newtheorem{theorem}{Theorem}
\newtheorem{corollary}{Corollary}
\newtheorem{lemma}{Lemma}
\newtheorem{proof}{Proof}
\definecolor{RED}{rgb}{0.6,0.,0.}
\definecolor{BLUE}{rgb}{0.,0.,0.6}
\definecolor{GREEN}{rgb}{0.,0.6,0.}
\definecolor{MALINA}{rgb}{0.6,0.,0.6}
\definecolor{YELLOW}{rgb}{0.8,0.8,0}
\begin{document}

\title{Estimating Distribution Grid Topologies:\\ A Graphical Learning based Approach}
\author{
\IEEEauthorblockN{Deepjyoti~Deka\dag, Scott~Backhaus\dag, and Michael~Chertkov\dag}
\IEEEauthorblockA{(\dag) Los Alamos National Laboratory\\
 Los Alamos, New Mexico, United States\\
\{deepjyoti, backhaus, chertkov\}@lanl.gov}
}

\maketitle
\begin{abstract}
Distribution grids represent the final tier in electric networks consisting of medium and low voltage lines that connect the distribution substations to the end-users/loads. Traditionally, distribution networks have been operated in a radial topology that may be changed from time to time. Due to absence of a significant number of real-time line monitoring devices in the distribution grid, estimation of the topology/structure is a problem critical for its observability and control. This paper develops a novel graphical learning based approach to estimate the radial operational grid structure using voltage measurements collected from the grid loads. The learning algorithm is based on conditional independence tests for continuous variables over chordal graphs and has wide applicability. It is proven that the scheme can be used for several power flow laws (DC or AC approximations) and more importantly is independent of the specific probability distribution controlling individual bus's power usage. The complexity of the algorithm is discussed and its performance is demonstrated by simulations on distribution test cases.
\end{abstract}
\begin{IEEEkeywords}
Distribution Networks, Power Flows, Graphical Models, Conditional Independence, Computational Complexity
\end{IEEEkeywords}

\section{Introduction}
\label{sec:intro}
The power grid comprises of the set of transmission lines that transfer power from generators to the end users. Due to operational ease, the grid is separated into different tiers or levels: transmission grid consisting of high voltage lines connecting the generators to the distribution substations, and distribution grid consisting of the medium and low voltage lines that connect the distribution substations to the final loads. Structurally, the distribution grid is distinguished by its radial (tree-like) operational topology with the substation at the root and loads positioned along the non-root nodes of the tree. In reality \cite{distgridpart1}, the actual set of lines comprising the distribution grid form a loopy graph but during operation, several of these lines are disconnected by open switches to create a tree structure (see Fig.~\ref{fig:city}).

With the advent of the smart grid, distributed control and optimization of distribution grid operations have become a reality through deployment of smart loads, deferrable loads, energy storage as well as distributed generation (like roof-top solar, wind). Optimal operations in the distribution grid depend on the correct estimation of its bus/node states (voltage and power consumption) and its operational radial topology. However, lines in the grid still suffer from limited real time metering that hinders the grid operator from learning the true topology \cite{hoffman2006practical}. In recent times, smart meters like PMUs \cite{phadke1993synchronized} have been built at the grid buses or individual households to relay high fidelity measurements of bus/nodal state in real time for demand response. The goal of this paper is to judiciously use the nodal real-time measurements to estimate the operational topology in the grid. Note that due to the exponential number of possible radial trees that can be created from a given loopy graph, a brute force structure learning scheme that tests the correctness of nodal measurements is computationally prohibitive. Efficient learning of the correct operational topology thus needs to utilize relations induced by the radial structure on the available measurements that can be checked with relative ease. In this work, we determine a specific graphical model based characterization of the probability distribution of bus voltages in a radial network and use it to develop a novel framework for structure estimation. Our learning framework is very general and hence able to operate under different operational conditions and power flow models that have been discussed in prior literature.

\subsection{Prior Work}
In the past, several efforts have been made to analyze the problem of topology estimation in power grids. \cite{kekatos2013grid} uses a maximum likelihood estimator with low-rank and sparsity regularizers to estimate the grid structure from electricity prices. The authors of \cite{he2011dependency} use a Markov random field model for bus phase angles and use it to build a dependency graph to detect faults in grids. \cite{bolognani2013identification} presents a topology identification algorithm for distribution grids with linear power flow model and constant $R/X$ (resistance to reactance) ratio for transmission lines. The algorithm uses the signs of elements in the inverse covariance of voltage measurements to build the operational tree. In \cite{sharon2012topology}, a machine learning (ML) topology estimate with approximate schemes is used in a distribution grid with Gaussian loads. Our previous work, \cite{distgridpart1} presents greedy structure and parameter learning algorithms using provable trends in second order moments of voltage magnitudes determined by a linear coupled approximation for lossless AC power flow \cite{89BWa,89BWb}. Subsequent work \cite{distgridpart2} extends this approach to reconstruct the operational tree with incomplete information where significant data is missing. Sets of line flow measurements have been used for topology estimation using maximum likelihood tests in \cite{ramstanford}. Other approaches \cite{berkeley} include machine learning schemes that compare available time-series observations from smart meters with database of permissible signatures to identify topology changes. Similar envelope comparison schemes have been used in parameter estimation as well \cite{sandia1, sandia2}. In the broader category of general random variables, graphical models \cite{wainwright2008graphical} provide a graph-based technique to understand the relationship/interaction between different variables and have been used to study languages, genetic networks, social interactions, decoding in communication schemes etc. To learn the graphical model structure, estimation schemes used include maximum likelihood schemes \cite{wainwright2008graphical} along with restrictions like sparsity \cite{ravikumar2010high}. Other schemes learn the graph by determining the neighbors of every node by conditional mutual information based tests \cite{anandkumar2011high, netrapalli2010greedy}. It is worth mentioning that a majority of work on graphical model learning is designed for discrete values variables or Gaussian random variables, where computation of conditional mutual information is relative easy. As power grid voltage and load profiles may pertain to arbitrary distributions over continuous random variables, detection tests will need to be general in nature. In work specific to continuous random variables, \cite{gretton2007kernel,zhang2012kernel} have designed kernel-based conditional independence tests for determining causality in directed graphs. We use such techniques within our learning scheme in this paper.

\subsection{Contribution}
In this paper, we develop a graphical model learning framework to determine the operational radial structure of the grid using nodal voltage measurements (voltage magnitudes and/or phasor measurements). In particular, we show that under standard assumptions of power consumption and common power flow models (DC and AC approximation),\emph{ the probability distribution of continuous valued nodal voltages can be described by a specific chordal graphical model}. This specific factorization motivates our structure learning algorithm where conditional independence tests on node quartets ($4$ variables each) are used to learn the operational edges. The graphical model based approach helps bridge previous work in the power domain designed for specific flow models with separate research in machine learning for general graphs. We present conditions under which the graphical model based scheme is applicable for several power flow laws. Our learning algorithm is independent of the exact probability distribution for each node's power usage and voltage profile, and also does not require knowledge on line impedances for estimating the structure. We show the polynomial computational complexity of our learning algorithm and present its performance by simulations on test radial networks.

The next section presents a brief discussion of distribution grid topology and a description of common power flow models. Following it, Section \ref{sec:graphicalmodel} analyzes the graphical model of power grid voltage measurements and its specific structure. We present conditional independence based properties of the distribution of voltage measurements in Section \ref{sec:learning} and use it to develop our learning algorithm. Section \ref{sec:conditional} discusses the computation of conditional independence tests for continuous random variables used in our learning algorithm. Simulation results of our learning algorithm on a test radial network are presented in Section \ref{sec:simulations}. Finally conclusions and future work are included in Section \ref{sec:conclusion}.

\section{Distribution Grid: Structure and Power Flows}
\label{sec:structure}
\textbf{Radial Structure}: We represent the distribution grid by the graph ${\cal G}=({\cal V},{\cal E})$, where ${\cal V}$ is the set of buses/nodes of the graph and ${\cal E}$ is the set of undirected lines/edges. The operational grid is determined by closed switches (operational lines) and has a `radial' structure as shown in Fig.~\ref{fig:city}. The operational grid is a collection of $K$ disjoint trees, $\cup_{i=1,\cdots,K}{\cal T}_i$. Each tree ${\cal T}_i$ spans a subset of the nodes ${\cal V}_{\cal T}$ connected by the set of operational edges ${\cal E}_{\cal T}$. Note that each tree has a substation at the root node. We denote nodes by alphabets $a$, $b$ and so on. The undirected edge connecting nodes $a$ and $b$ is denoted by $(ab)$.
\begin{figure}[!bt]
\centering
\includegraphics[width=0.40\textwidth, height =.30\textwidth]{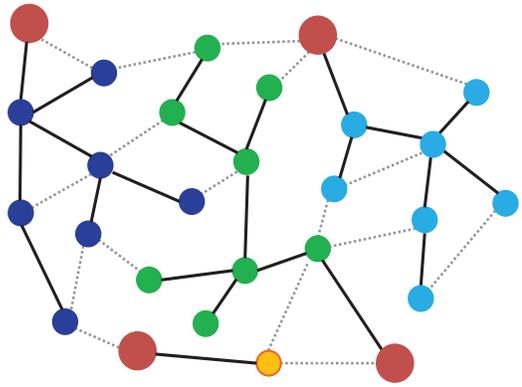}
\caption{Example of a distribution grid fed from $4$ substations, with substations represented by large red nodes. The operational grid is formed by solid lines (black). Dotted grey lines represent open switches. Load nodes within each tree are marked with the same color.
\label{fig:city}}
\end{figure}

In the rest of the paper, we limit our analysis to grids where the operational structure consists of one tree $\cal T$. The analysis can be directly extended to a general case where multiple disjoint trees are present as noted in comments in later sections. Further, we assume that the substation (root node) is connected by a known edge to one load node. The operational edges between the load nodes are unknown to the observer who has access only to load voltage measurements, but no flow or power injection measurements in the grid.

\textbf{Power Flow (PF) Models}: Let $z_{ab}=r_{ab}+i x_{ab}$ denote the complex impedances of line $(ab)$ in the distribution grid ($i^2=-1$) with resistance $r_{ab}$ and reactance $x_{ab}$. By Kirchhoff's laws, the complex valued PF equations governing the flow of power out of a node $a$ in a tree ${\cal T}$ is given by
\begin{align}
& P_a =p_a+i q_a=\underset{b:(ab)\in{\cal E}_{\cal T}}{\sum} V_a(V_a^* - V_b^*)/z_{ab}^*\label{P-complex}\\
&= \underset{b:(ab)\in{\cal E}_{\cal T}}{\sum}\frac{v_a^2-v_a v_b\exp(i\theta_a-i\theta_b)}{z_{ab}^*}\label{P-complex1}
\end{align}
where the real valued scalars, $v_a$, $\theta_a$, $p_a$ and $q_a$ denote the voltage magnitude, voltage phase, active and reactive power injection respectively at node $a$. $V_a (= v_a\exp(i\theta_a))$ and $P_a$ denote the nodal complex voltage and injection respectively. During normal operation, it is often assumed that the grid is lossless and the net power (generation minus demand) in the grid is zero. In that case, one bus in each tree can be considered as reference bus, with its power injection given by negative of the sum of injections at all other buses. Further the voltage and phase at the reference bus is considered at a fixed reference value without a loss of generality. In subsequent sections, we denote the substation root node in the operational tree as the reference node. Abusing notation, we use ${\cal V}_{\cal T}$ to denote the non-reference nodes/buses in the grid. Under the lossless assumption, there are several well-known relaxations to the PF equations as described below.

\textbf{DC model} \cite{abur2004power}: In this popular approximation, lines are considered to be purely inductive and voltage magnitudes are assumed to be constant at unity. Further, phase differences between neighboring lines are assumed to be small. This results in the following linear relation between active power flows and phase angles.
\begin{align}
\forall a\in {\cal V}_{\cal T}: &p_a = \sum_{b:(ab)\in{\cal E}_{\cal T}} \beta_{ab}(\theta_a-\theta_b) \nonumber\\
\text{In vector form,~} &p = H_{\beta}\theta \label{PF_DC}
\end{align}
Here, $H_{\beta}$ is the reduced weighted Laplacian matrix for tree ${\cal T}$ with edge weights given by line susceptances $\beta$ ($\beta_{ab}=\frac{x_{ab}}{x_{ab}^2+r_{ab}^2}$ for edge $(ab)$). The reduction is achieved by removing the row and column corresponding to the reference bus from the weighted Laplacian matrix.

\textbf{Linear Coupled (LC) model} \cite{distgridpart1,distgridpart2}: In this model, the PF Eqs.~(\ref{P-complex}) are linearized jointly over voltage phase difference and magnitude deviations ($v_a -1=\varepsilon_a$) from the reference voltage, both of which are considered to be small. We arrive at the following set of Linear-Coupled (LC) approximations, that are a generalization of the DC model.:
\begin{align}
p_a&=\underset{b:(ab)\in{\cal E}_{\cal T}}{\sum}\left(\beta_{ab}(\theta_a-\theta_b)+
g_{ab}(\varepsilon_a-\varepsilon_b)\right),
\label{PF_LPV_p}\\
q_a&=\underset{b:(ab)\in{\cal E}_{\cal T}}{\sum}
\left(-g_{ab}(\theta_a-\theta_b)+
\beta_{ab}(\varepsilon_a-\varepsilon_b)\right)\label{PF_LPV_q}\\
\text{where~} & g_{ab}\doteq\frac{r_{ab}}{x_{ab}^2+r_{ab}^2}, \beta_{ab}\doteq\frac{x_{ab}}{x_{ab}^2+r_{ab}^2} \label{g}
\end{align}

\textbf{LinDistFlow Model} \cite{89BWa,89BWb,89BWc}: This model is ideally made for analysis in distribution grids. Here nodal power injections are first expressed in terms of directed line flows, which are subsequently related to nodal voltages.
\begin{align}
&p_{a\to b}\approx p_b+\sum_{\substack{(bc)\in{\cal E}_{{\cal T}}\\c \neq a}} p_{b\to c},
q_{a\to b}\approx q_b+\sum_{\substack{(bc)\in{\cal E}_{{\cal T}}\\c \neq a}} q_{b\to c},
\label{lin_DF_q}\\
&\varphi_b = \varphi_a-2\left(r_{ab}p_{a\to b}+x_{ab} q_{a\to b}\right), \quad \varphi_a\equiv v_a^2
\label{lin_DF_v}
\end{align}
Note that due to the radial structure, there exist an invertible map between nodal injections and line flows and hence voltages can be efficiently computed. Further, as shown in \cite{distgridpart1}, if small deviations in voltage magnitude are assumed for the LinDistFlow model, we get the LC Eqs.~(\ref{PF_LPV_p}) and (\ref{PF_LPV_q}).

Other than being lossless, another characteristic of all the above mentioned models (DC, LC and LinDistFlow) is that they represent nodal power injections as an invertible function of nodal voltages (relative to the reference bus). This property is key as the basic PF Eqs.~(\ref{P-complex}) can have multiple feasible voltage profiles resulting in the same injection profile. In the next section, we show the use of PF and its lossless approximations in determining the graphical model of nodal voltages.

\section{Graphical Model from Power Flows}
\label{sec:graphicalmodel}
Our aim here is to obtain a graphical model representation for the probability distribution of voltages at all non-substation nodes in the distribution grid tree ${\cal T}$ (see Fig.~\ref{fig:junction1}). We make the following assumptions regarding power flows in the distribution tree:\\
\textbf{Assumption 1}: The random variables representing the power consumption at two load nodes (non-substation) of tree ${\cal T}$ are independent of each other and attain a steady distribution before the radial configuration changes.\\
\textbf{Assumption 2}: There is an invertible relation between power injections and nodal voltages within the domain of consideration.

Note that the first is a realistic assumption over short to medium intervals (minutes to hours) as fluctuations determining the probability distribution of load at different nodes will be independent and uncorrelated. The second assumption is commonly assumed in stability and convergence studies in power system applications \cite{farivar2013branch}. Further they are true for the power flow approximations described in the previous section. Under these assumptions, the continuous random vector $P$ representing the power consumption at the non-substation nodes ${\cal V}_{\cal T}$ in tree ${\cal T}$ has the following probability distribution function (p.d.f.):

\begin{align}
{\cal P}(P)=\prod_{a\in {\cal V}_{\cal T}}{\cal P}_a(P_a) = \prod_{a\in{\cal V}_{\cal T}}{\cal P}_a\left(\smashoperator[r]{\sum_{b:\{a,b\}\in{\cal E}_{\cal T}}}V_a(V_a^* - V_b^*)/z_{ab}^*\right) \label{GM_P}
\end{align}
where Eq.~(\ref{GM_P}) follows from Eq.~(\ref{P-complex}). ${\cal P}_a$ is the p.d.f. for injection at node $a$.
The distribution of complex voltage vector $V$ for the non substation nodes, using standard rules for p.d.f. of inverse functions, is given by
\begin{align}
{\cal P}(V) = \frac{1}{|J_P(V)|}\prod_{a\in{\cal V}_{\cal T}}{\cal P}_a\left(\smashoperator[r]{\sum_{b:\{a,b\}\in{\cal E}_{\cal T}}}V_a(V_a^* - V_b^*)/z_{ab}^*
\right) \label{GM_V}
\end{align}
Here $|J_P(V)|$ is the determinant of the Jacobian matrix $J_P(V) = \frac{\partial V}{\partial P}$ for the invertible transformation from complex vectors $P$ to $V$. We now create the graphical model using Eq.~(\ref{GM_V}).

\textbf{Graphical Model}: For a $n$ dimensional random vector $X = [X_1, X_2,..X_n]^T$, we create the undirected graphical model ${\cal GM}$ \cite{wainwright2008graphical} with vertex set ${\cal V_{GM}}$ such that each node represents one variable. For every node $i$, its graph neighbors form the smallest set of nodes $N(i) \subset {\cal V_{GM}}- {i}$ such that for any node set $C \subset {\cal V_{GM}} - {i}-N(i)$, ${\cal P}(X_i|X_ {N(i)},X_C) = {\cal P}(X_i|X_ {N(i)})$. Here $X_C$ represents the random variables corresponding to nodes in set $C$. Thus every node is conditionally independent of all other nodes given its graph neighborhood. Similarly, if deletion of a set of nodes $C$ separates the graphical model ${\cal GM}$ into two disjoint sets $A$ and $B$, then each node in set $A$ is conditionally independent of a node in $B$ given all nodes in $C$.

Observe the probability distribution of nodal voltages in Eq.~(\ref{GM_V}). Ignoring the Jacobian determinant, node $a$ appears in factors ${\cal P}_a$ and all terms ${\cal P}_b$, where $b$ is a tree neighbor of $a$. This is because each product term include voltages corresponding to a node and all its neighbors. Let $N_2(a)$ be the set of all nodes in tree ${\cal T}$ that are at a distance $1$-hop (neighbors) and $2$-hops (neighbors of neighbors) from $a$. If the distribution ${\cal P}(V)$ is conditioned on voltages corresponding to set $N_2(a)$, ignoring the Jacobian determinant, the rest can be product separated into terms containing only $a$ as a variable and terms excluding $a$. If the Jacobian determinant is product separable as well into nodes inside and outside of some set $B \cup \{a\}$ where $B \subset N_2(a)$, voltage at $a$ will be conditionally independent of the rest of the nodes given voltages in its 2-hop neighborhood. Thus, the following hold:
\begin{lemma} \label{lemma1}
For each node $a$ in grid tree ${\cal T}$, let the determinant of the Jacobian $J_P$ for the transformation between power injections and voltages be product separable into terms involving nodes in $B \cup {a}$ where $B \subset N_2(a)$, the two-hop neighborhood of $a$ in ${\cal T}$. Then the Graphical Model $\cal{GM}$ for the p.d.f. of load voltages in Eq.~(\ref{GM_V}) is given by inserting additional edges between every pair of neighbors of each node in tree ${\cal T}$.
\end{lemma}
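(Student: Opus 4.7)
The plan is to read the graphical model off directly from the product factorization in Eq.~(\ref{GM_V}), using the standard principle that, given a factorization $\prod_i f_i(V_{S_i})$, two variables are adjacent in the induced undirected graphical model exactly when they co-occur in some factor support $S_i$; equivalently, any graph whose clique set covers every $S_i$ is a valid graphical model for the distribution.

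First I would analyze the nodal factors ${\cal P}_a$. By Eq.~(\ref{P-complex1}), $P_a$ depends only on $V_a$ and $\{V_b : b\in N(a)\}$, so ${\cal P}_a$ has support $\{a\}\cup N(a)$. Ranging over $a$, these factors alone already force exactly the edges claimed by the lemma: every tree edge $(ab)$, plus, for each $a$, the ``moralization'' edges $(b_1b_2)$ for all pairs $b_1,b_2\in N(a)$. Next I would handle the Jacobian prefactor $1/|J_P(V)|$: by the stated assumption it decomposes into terms whose supports lie in $\{a\}\cup B$ for some $B\subset N_2(a)$. The maximal cliques of the proposed graph are precisely the sets $\{c\}\cup N(c)$ as $c$ ranges over tree nodes, so it suffices to exhibit, for each Jacobian-factor support, a tree node $c$ with the support contained in $\{c\}\cup N(c)$ (take $c=a$ when $B\subset N(a)$; otherwise let $c$ be the intermediate tree-neighbor of $a$ through which a strict 2-hop member of $B$ is reached). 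This shows the Jacobian contributes no edges beyond those already installed by the ${\cal P}_a$'s, so the graphical model is exactly the moralized tree.

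The step I expect to be the main obstacle is this second one: a maximal clique of the moralized tree is necessarily of the form $\{c\}\cup N(c)$, so the case analysis only closes cleanly when each Jacobian factor's support actually fits inside one such clique. In effect the lemma's hypothesis must be read as implicitly requiring the product separation to be compatible with this clique structure, since a factor mixing a 1-hop neighbor of $a$ with a strict 2-hop neighbor reached via a \emph{different} intermediary would leave a stray edge. Fortunately this is automatic for the linearized flow models (DC, LC, LinDistFlow) of Section~\ref{sec:structure}, where $|J_P(V)|$ is a constant and contributes no factors at all, so the graphical model reduces cleanly to the tree augmented by edges between every pair of tree-neighbors of each node, as claimed.
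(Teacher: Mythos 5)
Your proposal is correct in substance and starts from the same place as the paper's (essentially one-line) proof: read the graphical model off the product form of Eq.~(\ref{GM_V}), using that each factor ${\cal P}_a$ has support $\{a\}\cup N(a)$ and that the Jacobian prefactor is, by hypothesis, localized near each node. The difference is in how the Jacobian term is absorbed. The paper verifies the local Markov property directly: conditioning on \emph{all} of $N_2(a)$ freezes every variable other than $V_a$ that appears in ${\cal P}_a$, in ${\cal P}_b$ for $b\in N(a)$, and in the Jacobian factor containing $V_a$ (whose remaining support lies in $B\subset N_2(a)$), so ${\cal P}(V_a\mid V_{\text{rest}})={\cal P}(V_a\mid V_{N_2(a)})$ and the neighborhood of $a$ in ${\cal GM}$ is contained in $N_2(a)$. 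Your route instead demands that each factor support sit inside a maximal clique $\{c\}\cup N(c)$ of the moralized tree, and the ``main obstacle'' you flag --- a Jacobian factor mixing two strict two-hop neighbors of $a$ reached through different intermediaries --- is an artifact of that stronger requirement; the paper's conditioning argument never needs it. Moreover, no implicit strengthening of the hypothesis is required to dispose of it: the separability condition is imposed at \emph{every} node, so an irreducible factor containing both $V_{d_1}$ and $V_{d_2}$ with $d_1,d_2$ at tree distance four would already violate the hypothesis at $a=d_1$ (since $d_2\notin N_2(d_1)$); and in a tree any vertex set with pairwise distances at most two lies inside some closed neighborhood $\{c\}\cup N(c)$, so pairwise locality does yield the clique containment you want. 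One caveat shared by your argument and the paper's: both establish only that the moralized tree is a valid Markov structure (an I-map) for Eq.~(\ref{GM_V}); neither shows that every added neighbor--neighbor edge is necessary, which the paper's definition of ${\cal GM}$ (smallest separating neighborhoods) strictly requires and which would need a genericity/faithfulness assumption on the ${\cal P}_a$'s.
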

The proof follows immediately from the discussion on conditional independence and definition of graphical model. Fig.~\ref{fig:junction2} shows an example construction of a graphical model. Each node in $\cal GM$ represents the random variable for voltage at its corresponding node in $\cal T$. Note that the graphical model is loopy as neighbors of each node in the original grid tree ${\cal T}$ are connected by edges in $\cal GM$. These additional edges create clique sets (complete subgraphs), each composed of a non-leaf node and its immediate neighbors from tree ${\cal T}$, as depicted in Fig.~\ref{fig:junction2}. In the terminology of graph theory, $\cal GM$ is chordal (every cycle of size greater than $3$ has a chord) and its p.d.f. has a Junction Tree Factorization \cite{wainwright2008graphical}. We leave a discussion on this factorization for future work.

A special case of a separable Jacobian determinant is one with constant determinant. We know that Jacobian matrices of linear functions have constant determinants. We thus have the following corollary to Lemma~\ref{lemma1}.
\begin{corollary} \label{corollary1}
If the nodal power injections in a distribution tree ${\cal T}$ can be represented as a linear function of the nodal voltages, then the graphical model $\cal GM$ for the nodal voltages is generated by adding edges between every pair of neighbors of each node in tree ${\cal T}$.
\end{corollary}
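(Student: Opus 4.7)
My plan is to derive the corollary directly from Lemma~\ref{lemma1} by showing that the linearity hypothesis forces the Jacobian determinant to be a constant, which is a trivial special case of the product-separability condition required by the lemma.

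First I would write the linearity assumption explicitly: if the nodal power injection vector $P$ is a linear function of the nodal voltage vector $V$ (relative to the fixed reference bus), then there exists a constant matrix $M$ with $P=MV$. Invertibility (Assumption~2) means $M$ is nonsingular, so $V=M^{-1}P$ and the Jacobian of the transformation from $P$ to $V$ is $J_P(V)=\partial V/\partial P=M^{-1}$, independent of $V$.

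Next I would observe that $|J_P(V)|=|\det M^{-1}|$ is therefore a constant scalar. A constant factor is trivially product separable over any partition of the node set: for each node $a$, one can (vacuously) write $|J_P(V)|$ as a product of two factors, one indexed by $\{a\}\cup B$ with $B\subset N_2(a)$ and the other indexed by the complementary set, by absorbing the constant into either factor. Thus the hypothesis of Lemma~\ref{lemma1} is satisfied at every node.

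Finally I would invoke Lemma~\ref{lemma1} to conclude that the graphical model $\mathcal{GM}$ of the voltage distribution in Eq.~(\ref{GM_V}) is obtained from $\mathcal{T}$ by inserting edges between every pair of neighbors of each node. I do not anticipate a substantive obstacle here, since the whole point of the corollary is to flag that popular linearized flow models (DC, LC, LinDistFlow) all fall under this constant-Jacobian regime; the only subtlety worth a sentence is explicitly noting that Assumption~2 ensures $M$ is invertible so that the change-of-variables formula used to obtain Eq.~(\ref{GM_V}) is well-defined.
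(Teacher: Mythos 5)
Your proposal is correct and follows essentially the same route as the paper: the paper also argues that a linear injection--voltage relation yields a constant Jacobian determinant, which is a trivial special case of the product-separability hypothesis of Lemma~\ref{lemma1}. Your write-up merely makes explicit the steps ($P=MV$, $J_P(V)=M^{-1}$, constancy implies separability) that the paper states in a single sentence.
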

In particular, as DC, LC models and LinDistFlow approximation (reactive power and square of voltage magnitudes if active power is kept constant) mentioned in the previous section each represent a linear relation, they generate the graphical model structure given by Corollary~\ref{corollary1} and Fig.~\ref{fig:junction2}. It needs to be mentioned that outside of Corollary~\ref{corollary1}, other invertible voltage-power relations may exist that satisfy the separability of Jacobian determinants mentioned in Lemma~\ref{lemma1}. Further, the graphical model characterization does not depend on the exact distribution characterizing each node's power injections. However convergence of empirical results based on the graphical model will depend on the distributions considered.

\begin{figure}[!bt]
\centering
\subfigure[]{\includegraphics[width=0.22\textwidth,height =.18\textwidth]{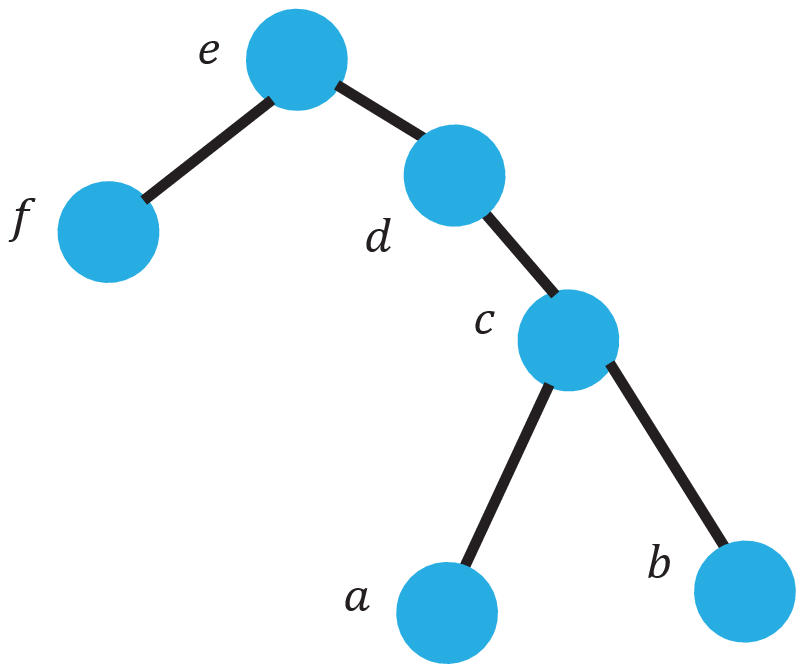}\label{fig:junction1}}
\subfigure[]{\includegraphics[width=0.25\textwidth]{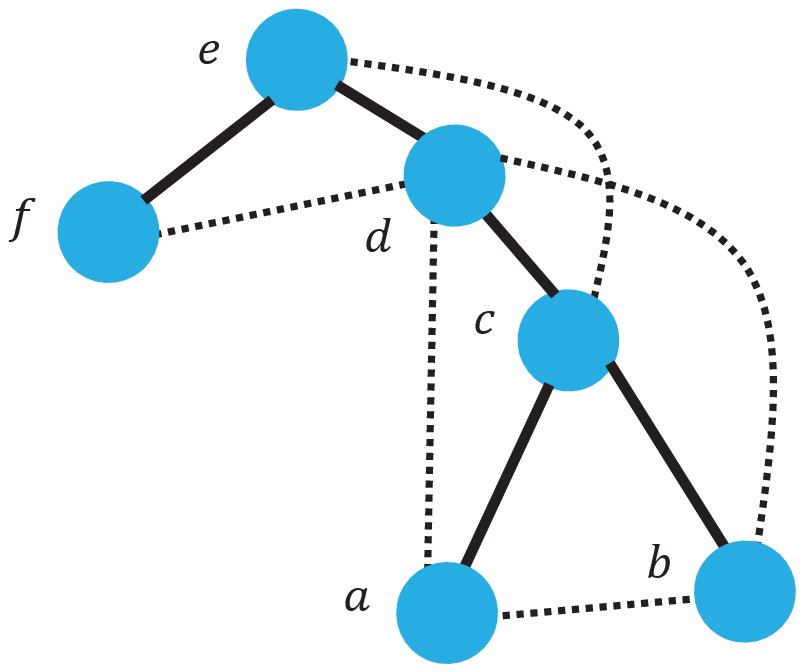}\label{fig:junction2}}
\vspace{-.25cm}
\caption{(a) Load Nodes in distribution tree ${\cal T}$ (b) Graphical Model $\cal GM$ for Nodal Voltages}
\label{fig:junction}
\vspace{-3mm}
\end{figure}

\textbf{Gaussian example}: To demonstrate the validity of the graphical model, we consider the linear DC power flow model (see Eq.~(\ref{PF_DC})) in tree ${\cal T}$ with operational edge set ${\cal E}_{\cal T}$. We consider the case where the vector of active load profiles ($P = p$) follows an uncorrelated multi-variate Gaussian distribution with diagonal covariance matrix $\Omega_{p}$. As linear functions of Gaussian random variables are Gaussian random variables, $\theta$ is a Gaussian random vector as well. Properties of multi-variate Gaussian distributions \cite{wainwright2008graphical} state that each non-zero off-diagonal term in the inverse covariance matrix $\Omega^{-1}_{\theta}$ of $\theta$ represents an edge in its representative graphical model. Using Eq.~(\ref{PF_DC}), $\Omega^{-1}_{\theta}$ is given by $\Omega^{-1}_{\theta}(a,b) = \sum_{c}H_{\beta}(a,c)\Omega^{-1}_{p}(c,c)H_{\beta}(b,c)$ where $H_{\beta}$ is the susceptance-weighted reduced Laplacian for ${\cal T}$. We now have
\begin{align} 
\Omega^{-1}_{\theta} (a,b)=\begin{cases}&H_{\beta}(a,b)H_{\beta}(a,a)\Omega^{-1}_{p}(a,a)\nonumber\\
&+ H_{\beta}(a,b)H_{\beta}(b,b)\Omega^{-1}_{p}(b,b) ~~\text{if $(ab) \in {\cal E}_{\cal T}$},\\
&H_{\beta}(a,c)H_{\beta}(b,c)\Omega^{-1}_{p}(c,c)~~\text{if $(ac), (bc) \in {\cal E}_{\cal T}$},\\
&0 ~~\text{otherwise} \end{cases} \label{treeinv}
\end{align}
Thus the graphical model for ${\theta}$ has the original operational edges ${\cal E}_{\cal T}$ in tree ${\cal T}$ and additional edges between two-hop neighbors, as suggested by Lemma~\ref{lemma1} and Corollary~\ref{corollary1}.

In the next section, we discuss the use of conditional independence tests to determine the presence of edges in ${\cal T}$. It is worth mentioning that all results in this section extend directly to distribution grids with multiple disjoint trees as the probability distribution for voltages in each tree is independent of others and hence the overall distribution can be factorized into product terms.

\section{Conditional Independence based Topology Learning}
\label{sec:learning}
Consider a distribution tree ${\cal T}$ with a substation as reference node and given set of all possible edges ${\cal E}$ (operational or open). The goal of topology learning scheme is to determine the operational edge set ${\cal E}_{\cal T}$ between load nodes ${\cal V}_{\cal T}$ using load voltage measurements in an invertible power flow model. Our learning approach uses results on conditional independence of voltage measurements in the graphical model $\cal GM$ as discussed below.

\begin{theorem}\label{condind}
Let distribution tree ${\cal T}$ be such that the p.d.f. of load voltages satisfies the graphical model in Lemma~\ref{lemma1}. The following conditional independence results hold:
\begin{itemize}
\item If $(ab)$ is an operational edge between non-leaf nodes $a$ and $b$ in $\cal T$, there exists distinct nodes $c$ and $d$ with ${\cal P}(V_c,V_d|V_a,V_b) = {\cal P}(V_c|V_a,V_b){\cal P}(V_d|V_a,V_b)$.
\item If $(ab)$ is \emph{not} an operational edge, then ${\cal P}(V_c,V_d|V_a,V_b) = {\cal P}(V_c|V_a,V_b){\cal P}(V_d|V_a,V_b)$ does \emph{not} hold for any nodes $c$ and $d$ distinct from $a$ and $b$ in ${\cal T}$.
\item If nodes $a$ and $b$ are separated by \emph{greater than two hops}, then there exists at least one operational edge $(cd)$ in $\cal T$ with ${\cal P}(V_a,V_b|V_c,V_d) = {\cal P}(V_a|V_c,V_d){\cal P}(V_b|V_c,V_d)$.
\end{itemize}
\end{theorem}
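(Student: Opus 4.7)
The plan is to convert each of the three conditional-independence claims into a vertex-separation claim on the chordal graphical model ${\cal GM}$ of Lemma~\ref{lemma1}, and then verify those claims combinatorially on the tree ${\cal T}$. By Lemma~\ref{lemma1} the joint density ${\cal P}(V)$ is Markov with respect to ${\cal GM}$, so whenever $\{a,b\}$ separates $c$ from $d$ in ${\cal GM}$ the factorization ${\cal P}(V_c,V_d\mid V_a,V_b)={\cal P}(V_c\mid V_a,V_b){\cal P}(V_d\mid V_a,V_b)$ follows automatically; the converse direction (non-separation forcing non-factorization) is the standard faithfulness property, explicitly visible in the Gaussian template of Eq.~(\ref{treeinv}) and adopted as a generic hypothesis for the continuous invertible flows considered here. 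The single graph-theoretic fact exploited throughout is that every edge of ${\cal GM}$ joins two vertices at tree-distance at most two, since each such edge is either a tree-edge of ${\cal T}$ or a triangle-edge between two tree-neighbors of a common node.

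For the first bullet, suppose $(ab)\in{\cal E}_{\cal T}$ with both $a, b$ non-leaves. Choose $c$ to be any tree-neighbor of $a$ other than $b$, and $d$ any tree-neighbor of $b$ other than $a$; both exist by the non-leaf hypothesis. Deleting the edge $(ab)$ from ${\cal T}$ splits it into subtrees ${\cal T}_a \ni a, c$ and ${\cal T}_b \ni b, d$. For any $x \in {\cal T}_a \setminus \{a\}$ and $y \in {\cal T}_b \setminus \{b\}$, the tree-path from $x$ to $y$ has length at least $d_{\cal T}(x,a) + 1 + d_{\cal T}(b,y) \geq 3$, so no ${\cal GM}$-edge can bridge the two sides once $a$ and $b$ are removed. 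Hence $c$ and $d$ lie in different connected components of ${\cal GM} \setminus \{a, b\}$, and the Markov property delivers the claim.

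For the second bullet, assume $(ab) \notin {\cal E}_{\cal T}$; the aim is to prove that ${\cal GM} \setminus \{a, b\}$ is actually connected, so that no pair $\{c, d\}$ is separated by $\{a, b\}$. Given arbitrary $c, d$ distinct from $a$ and $b$, take the tree-path $c = z_0, z_1, \ldots, z_m = d$; whenever an interior $z_i$ equals $a$ or $b$, shortcut the subpath $z_{i-1}, z_i, z_{i+1}$ by the single ${\cal GM}$-edge $(z_{i-1}, z_{i+1})$, which is available by the triangle structure at $z_i$. After finitely many such shortcuts one obtains a walk from $c$ to $d$ in ${\cal GM} \setminus \{a, b\}$, and faithfulness rules out the factorization. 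The third bullet is then immediate: if $d_{\cal T}(a,b) \geq 3$, the path $a = z_0, z_1, \ldots, z_k = b$ contains an interior edge $(z_1 z_2)$ both of whose endpoints have two path-neighbors and are therefore non-leaf, so the first-bullet argument applied to $(cd) = (z_1 z_2)$ separates $a$ from $b$ in ${\cal GM} \setminus \{z_1, z_2\}$.

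The genuinely delicate step is the second bullet, since its negative conclusion relies on the converse of the Markov property: non-separation in ${\cal GM}$ must translate into true probabilistic dependence. The separation-implies-CI direction used in the other two bullets is routine, but justifying faithfulness for the inverse-Jacobian-weighted product density~(\ref{GM_V}) under only Assumptions~1 and 2 is where the actual technical effort lies. The plan is either to restrict to distributions with strictly positive densities on their support, or, mirroring the Gaussian pattern of Eq.~(\ref{treeinv}), to verify that the inverse second-order dependency structure carries precisely the edge-set of ${\cal GM}$; cleanly isolating that hypothesis from the combinatorial content is the main hurdle to navigate.
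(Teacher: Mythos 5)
Your proposal is correct and follows essentially the same route as the paper: all three bullets are reduced to vertex-separation statements in the chordal model ${\cal GM}$, with the first bullet argued by splitting ${\cal T}$ across the edge $(ab)$, the second by the triangle-shortcut walk showing ${\cal GM}\setminus\{a,b\}$ stays connected, and the third by reusing the first bullet on the edge formed by the first two interior nodes of the $a$--$b$ path. Your explicit flagging of the faithfulness assumption needed for the second bullet (non-separation must imply genuine dependence) is a point the paper's own proof passes over silently, so no further change is needed.
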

\begin{proof}
In tree $\cal T$, for any non-leaf nodes $a$ and $b$ connected by edge $(ab)$ there exists distinct nodes $c$ and $d$ that are on opposite side of $(ab)$ and separated by greater than two hops. In the corresponding graphical model ${\cal GM}$, nodes $b$ and $c$ belong to separate disjoint groups when nodes $a$ and $b$ are removed. Their voltages are thus conditionally independent given nodes $a$ and $b$. For the second statement, let $(ab)$ not be an edge in $\cal T$. Then there exists a path between any two nodes in $\cal GM$ even if both nodes $a$ and $b$ are removed as $\cal GM$ contains edges between the neighbors of either node in $\cal T$. Thus, removal of $a$ and $b$ doesn't lead to conditional independence of any other node pair. Finally, for the third statement, nodes $a$ and $b$ are separated by greater than two hops in $\cal T$. Thus, they exist on either side of edge $(cd)$ where $c$ and $d$ are the first two nodes on the path from $a$ to $b$. Using the first statement, ${\cal P}(V_a,V_b|V_c,V_d) = {\cal P}(V_a|V_c,V_d){\cal P}(V_b|V_c,V_d)$.
\end{proof}

In Fig.~\ref{fig:junction2}, voltages at nodes $f$ and $c$ will become conditionally independent given nodes $e$ and $d$. However, given nodes $e$ and $c$, no other pair becomes conditionally independent as the graphical model $\cal GM$ is not disconnected due to removal of $e$ and $c$. Note that the first conditional independence result in Theorem~\ref{condind} identifies operational edges between non-leaf nodes in ${\cal T}$. The final statement provides a technique to determine operational edges to each leaf node using discovered edges between its parent and other non-leaf nodes.

To distinguish all edges uniquely, \emph{we assume that $\cal T$ has depth greater than three, i.e., there exists a path of length at least four in $\cal T$}. This is necessary as no pair of non-leaf nodes exist for testing conditional independence results if depth is $2$. Further for depth $3$ (exactly two non-leaf nodes), it can be shown that the leaf nodes connected to each non-leaf node can be exchanged without changing the conditional independence tests. We omit the detailed proof for brevity. Under this depth assumption, we now present Algorithm $1$ for topology learning in distribution grid tree ${\cal T}$.

\begin{algorithm}
\caption{Topology Learning for Grid Tree ${\cal T}$}
\textbf{Input:} $m$ complex voltage observations $V^j ~(1\leq j \leq m)$ at all load nodes in set ${\cal V}_{\cal T}$, set of all candidate lines ${\cal E}$\\
\textbf{Output:} Operational edge set ${\cal E}_{\cal T} \subset {\cal E}$\\
\begin{algorithmic}[1]
\State ${\cal V}_p \gets \emptyset$ 
\ForAll{$(ab) \in {\cal E}$} \label{nonleaf1}
\ForAll{$(ac), (db) \in {\cal E}$, $c,d$ distinct from $a,b$ }
\If {${\cal P}(V_c,V_d|V_a,V_b) ={\cal P}(V_c|V_a,V_b){\cal P}(V_d|V_a,V_b)$}
\State ${\cal E}_{\cal T} \gets {\cal E}_{\cal T} \cup \{(ab)\}$, ${\cal V}_p \gets {\cal V}_p \cup \{a,b\}$
\EndIf
\EndFor
\EndFor \label{nonleaf2}
\State Sort nodes in ${\cal V}_p$ with increasing degree
\State ${\cal V}_{leaf} \gets {\cal V} -{\cal V}_p$ 
\For {\text{$i=1$ to $|{\cal V}_p|$}}
\State Pick node $a$ of degree $1$ in ${\cal V}_p$ \label{leaf1}
\State Pick nodes $b$ and $c$ such that $(ab),(bc) \in {\cal E}_{\cal T}$
\ForAll{$l \in {\cal V}_{leaf}, (al) \in {\cal E}$}
\If {${\cal P}(V_l,V_c|V_a,V_b) = {\cal P}(V_l|V_a,V_b){\cal P}(V_c|V_a,V_b)$}
\State ${\cal E}_{\cal T} \gets {\cal E}_{\cal T} \cup \{(al)\}$
\State ${\cal V}_{leaf} \gets {\cal V}_{leaf} - \{l\}$
\EndIf
\EndFor \label{leaf2}
\State Remove node $a$ from ${\cal V}_p$ and update degree of others
\EndFor
\end{algorithmic}
\end{algorithm}

\textbf{Working}: Note that we compute the conditional independence among nodes in the permissible neighborhood of each potential edge instead of all possible node combinations. Thus, we only test conditional independence of nodes $c$ and $d$ given $a$ and $b$ such that $(ac),(cd), (db)$ are in ${\cal E}$. This is sufficient as an operational edge between non-leaf nodes $a$ and $b$ will make at least two nodes in their respective neighborhoods conditional independent. Here, Steps~\ref{nonleaf1} to~\ref{nonleaf2} discover such edges between non-leaf nodes using the first statement in Theorem~\ref{condind}. The set ${\cal V}_p$ includes the non-leaf nodes that have been discovered. Next, Step~\ref{leaf1} to~\ref{leaf2} iteratively identifies connections to potential leaf nodes in set ${\cal V}_{leaf}$ using the final statement in Theorem~\ref{condind}. Note that in each iteration, we search for leaf nodes $l$ in the neighborhood of a non-leaf node $a$ with one discovered edge to other non-leaf nodes in ${\cal V}_p$. This is done to avoid cases where $l$ is not $a$'s child but still satisfies the third conditional test in Theorem~\ref{condind}. Once leaf nodes of node $a$ are discovered, it is removed from the set of potential parents ${\cal V}_p$ and another parent is selected.

\textbf{Complexity}: Each conditional independence test in Algorithm $1$ is restricted to $4$ nodes only. This is distinct from general graph learning where number of variables in conditional independence tests can scale with the graph size. Further, as each voltage is a complex valued quantity, we can reduce the complexity by computing the conditional independence of voltage magnitudes or phase angles at two nodes given the complex voltages at two others. This reduces the computation to $6$ real-valued random variables instead of $8$ earlier. If magnitude and phase angles are discrete quantities taking $p$ possible values each, the complexity of one conditional independence test is $O(p^6)$. As we deal with continuous random variables, the complexity function $C$ will depend on the estimation technique used as discussed in the next section. However $C$ will always be independent of the grid size. Determination of edges between non-leaf nodes requires $O(D_{max}^2)$ tests per edge in ${\cal E}$, where $D_{max}$ is the maximum degree of any node in ${\cal E}$. As the total number of non-leaf nodes is less than $|{\cal V}_{\cal T}|$, sorting them according to their degree has complexity $O(|{\cal V}_{\cal T}|\log|{\cal V}_{\cal T}|)$. Finally determining the edges connected to leaf nodes require total $O(|{\cal V}_{\cal T}|D_{max})$ tests in the neighborhood of non-leaf nodes. As $|{\cal V}_{\cal T}| \leq |{\cal E}|$, the overall complexity of Algorithm $1$ is terms of the distribution grid size is thus $O(|{\cal E}|D_{max}^2+ |{\cal V}_{\cal T}|\log|{\cal V}_{\cal T}|)$. Note that we analyze the complexity in terms of the size of set ${\cal E}$, as often the number of permissible edges is less than that in a complete graph. If $\cal E$ corresponds to a complete graph, ${\cal E} = O(|{\cal V}_{\cal T}|^2)$ and $D_{max} =O(|{\cal V}_{\cal T}|)$, so the algorithm scales as $O(|{\cal V}_{\cal T}|^4)$ in terms of the graph size.

In the general case where the distribution grid is composed of multiple disjoint trees, voltages at two nodes $c$ and $d$ of different trees will be independent of each other. Thus, while reconstructing the operational topology in the general case, we need to check if nodes $c$ and $d$ are \emph{unconditionally} independent. If the unconditional independence test holds true, they will be determined as belonging to separate trees and hence excluded from remaining conditional independence tests in Algorithm $1$ to determine operational edges. The general case will be addressed in detail in future work. In the next section, we discuss computation of the conditional independence test for continuous random variables.

\section{Computation of Conditional Independence Test}
\label{sec:conditional}
Algorithm $1$ checks if the voltage measurements at two nodes $c$ and $d$ are independent conditioned on knowledge of voltage measurements at two other nodes $a$ and $b$. As these measurements are continuous random variables with arbitrary distribution, testing their conditional independence is a non-trivial task. Non-parametric techniques to determine conditional independence in literature include ones based on distances between estimated conditional densities \cite{su2008nonparametric} or between characteristic functions \cite{su2003consistent}. Another technique \cite{margaritis2005distribution} involves binning the domain of conditioned random variables and then treating them like discrete variables. However, these techniques require large number of samples for correct estimation. A much cited line of work \cite{fukumizu2004dimensionality,gretton2007kernel,zhang2012kernel} has focussed on kernel-based techniques for non-parametric conditional independence test. Here conditional independence is characterized using covariance operators in Reproducing Kernel Hilbert Spaces (RKHS) and the Hilbert-Schmidt norm of the normalized conditional cross-covariance operator is used as a measure of conditional independence. When the random variables tested are conditionally independent, the corresponding Hilbert-Schmidt norm vanishes and identifies it. We follow the kernel-based conditional independence test in \cite{gretton2007kernel} in this paper. The matlab code for it can be found at \cite{zhang2012kernel}. Next we present simulation results of our topology learning algorithm on test radial distribution networks.

\section{Simulations Results}
\label{sec:simulations}
We demonstrate the performance of Algorithm $1$ in extracting the operational edge set ${\cal E}_{\cal T}$ of tree grid ${\cal T}$ from a loopy original edge set ${\cal E}$. We consider a modified tree distribution network \cite{testcase2,radialsource} with $19$ load nodes and one substation as shown in Fig.~\ref{fig:case}. For our simulations, we consider active load profiles to follow uncorrelated Gaussian random variables. Note that the algorithm is independent of the each load's distribution. We use DC PF model (see Eq.~(\ref{PF_DC})) to generate nodal voltage phase angle samples for the radial operational topology specified by ${\cal E}_{\cal T}$. As stated in the paper, the algorithm will work with other linear and invertible power flow models with separable Jacobian determinants as well. We then add $20$ additional edges at random locations to construct the loopy set of potential edges ${\cal E}$ that is used as input by the observer. In the test radial network depicted in Fig.~\ref{fig:case}, solid and dashed lines represent operational and open lines respectively. Fig.~\ref{fig:resultDC} plots the average errors (relative to the number of operational edges) generated by our algorithm for different samples sizes in the $19$ bus case. Note that increasing the number of samples leads to lesser number of errors for all tolerance values used in the conditional independence detection test (see Theorem~\ref{condind}). Further observe that a weaker tolerance $(4.5*10^{-7})$ produces the least number of errors for lower sample sizes. However for higher sample sizes (more accurate conditional independence tests), the majority of errors arise from selection of non-operational edges (false positives). Thus, the greatest accuracy is reached at a tighter tolerance $(3.5*10^{-7})$.
\begin{figure}[!bt]
\centering
\includegraphics[width=0.36\textwidth,height =.30\textwidth]{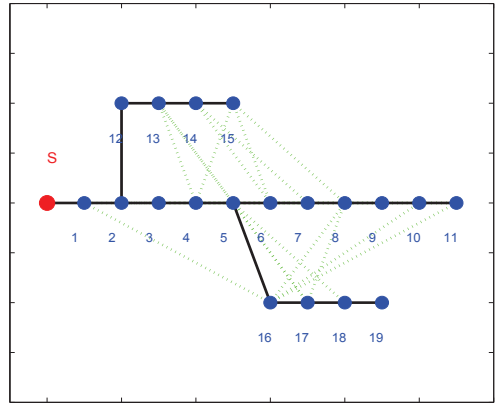}
\vspace{-.10cm}
\caption{Layouts of test distribution grid. The red circles represent substations (marked as $S$). The blue circles represent load nodes that are numbered. Black lines represent operational edges. The additional open lines are represented by dotted green lines.}
\label{fig:case}
\vspace{-2mm}
\end{figure}
\begin{figure}[!bt]
\centering
\includegraphics[width=0.42\textwidth,height = .38\textwidth]{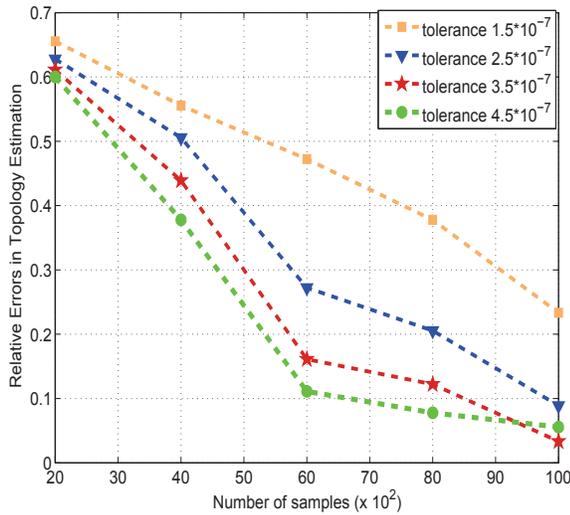}
\caption{Accuracy of Topology Learning Algorithm with number of measurement samples for test system in Fig.~\ref{fig:case}. The tolerance values are used to determine if a conditional independence test is positive.}
\label{fig:resultDC}
\vspace{-3mm}
\end{figure}

\section{Conclusion}
\label{sec:conclusion}
Efficient estimation of the operational topology of the distribution grid can benefit several applications. In this manuscript, we have presented an algorithm that uses non-parametric conditional independence tests to determine the operational edges in the radial grid. The algorithm is based on the structural features of the graphical model for voltage measurements. We show that the graphical model despite being loopy can be separated into independent sets conditioned on operational edges in the tree grid. We analyze the working of our topology learning algorithm and show that its computational complexity is a polynomial function of the grid size.

The primary benefit of our learning algorithm is that it uses both physical power flow laws and ideas from general graph learning. The learning algorithm is applicable for all lossless linear approximations as well as other invertible power flow models whose Jacobian determinants satisfy a separability criteria. Further, only nodal voltage measurements are needed as input to the algorithm and no flow or injection information is necessary. In fact the learning algorithm is independent of the line parameters (resistance and reactance) as well as explicit distributions and statistics associated with individual load profiles. We use kernel based tests for conditional independence within our algorithm and demonstrate its performance over a test distribution grid. Due to its general applicability to diverse operational conditions and models, our learning algorithm can thus be used for multiple applications like failure identification, security quantification and privacy aware flow optimization. Generalizing the algorithm for distribution grids with mutiple radial trees and developing a theoretical understanding of the sample complexity necessary for error free learning are potential future directions of research.

\bibliographystyle{IEEETran}
\bibliography{../../Bib/FIDVR,../../Bib/SmartGrid,../../Bib/voltage,../../Bib/trees}
\end{document}